\DeclareMathAlphabet\mathoo{U}{eur}{b}{n}
\theoremstyle{plain}
\newtheorem{theorem}{Theorem}
\newtheorem{proposition}[theorem]{Proposition}
\newtheorem{corollary}[theorem]{Corollary}
\theoremstyle{remark}
\begin{document}

\title[Computation of a function of a matrix]{Computation of a function of a matrix \\with close eigenvalues by means of\\
the Newton interpolating polynomial}

\author{V.G. Kurbatov}
\address{Russian Presidential Academy of National Economy and Public
Administration} \curraddr{} \email{kv51@inbox.ru}

\author{I.V. Kurbatova}\thanks{The second author was supported by the Russian Foundation
for Basic Research, research project No. 13-01-00378.}
\address{Air Force Academy of the Ministry of Defense of the Russian
Federation} \curraddr{} \email{la\_soleil@bk.ru}

\subjclass[2010]{65F60, 41A10, 65D05}
\date{}
\keywords{Matrix function, matrix exponential, Schur decomposition, Newton interpolating
polynomial, impulse response.}

\begin{abstract}
An algorithm for computing an analytic function of a matrix $A$ is described. The
algorithm is intended for the case where $A$ has some close eigenvalues, and clusters
(subsets) of close eigenvalues are separated from each other. This algorithm is a
modification of some well known and widely used algorithms. A novel feature is an
approximate calculation of divided differences for the Newton interpolating polynomial in
a special way. This modification does not require to reorder the Schur triangular form
and to solve Sylvester equations.
\end{abstract}

\maketitle

 \thispagestyle{empty}
\section{Introduction}\label{s:introduction}

Matrix functions (see, e.g.,~\cite{Gantmakher,Golub-Van Loan96}) play a role of a useful
language and an effective tool in many applications. The most popular matrix function is
the matrix exponential; it is closely connected with solutions of differential equations.
One of the problems (see~\cite{Moler-Van Loan78,Moler-Van Loan03}) that arise in the
process of calculating a function $f$ of a matrix $A$ is appearing of expressions of the
form $\frac{f(\mu_1)-f(\mu_2)}{\mu_1-\mu_2}$, where $\mu_1$ and $\mu_2$ are eigenvalues
of the matrix~$A$. If the eigenvalues $\mu_1$ and $\mu_2$ are close to each other, the
literal meaning of the expression $\frac{f(\mu_1)-f(\mu_2)}{\mu_1-\mu_2}$ implies the
calculation of differences of close numbers, which leads to essential loss of accuracy,
see~\cite{Moler-Van Loan78,Moler-Van Loan03} for a detailed discussion. If the difference
$\mu_1-\mu_2$ is very small, it is reasonable to change approximately the expression
$\frac{f(\mu_1)-f(\mu_2)}{\mu_1-\mu_2}$ by $f'(\mu_1)$. But if the difference
$\mu_1-\mu_2$ is neither large nor small, then the problem becomes more serious.

A way of overcoming this problem was discussed in \cite{Parlett74,Parlett76,
Kagstrom77,McCurdy-Ng-Parlett,Parlett-Ng,Davies-Higham2003,Higham-Al-Mohy2010}. The
initial step consists in the transformation of the matrix $A$ to a triangular form by
means of the Schur algorithm (see, e.g.,~\cite[ch.~7]{Golub-Van Loan96}). As a result, in
particular, the spectrum of the matrix $A$ becomes known. After that the spectrum is
divided into clusters (parts) $S_j$ in such a way that the eigenvalues within a cluster
are close to each other, and the eigenvalues from different clusters lie far apart, for a
detailed discussion of this procedure we refer
to~\cite{McCurdy-Ng-Parlett,Davies-Higham2003}. Then, the Schur triangular representation
\emph{is reordered}, i.e., it is changed so that the eigenvalues (which are the diagonal
elements of the triangular matrix) from the same cluster are situated near to each other
(the standard Schur algorithm does not guarantee such an ordering even in the case of the
real spectrum). Thus, one arrives at the block triangular representation in which the
spectra of different diagonal blocks are concentrated in small sets (clusters), and, at
the same time, are widely spaced from each other. Finally, the function of the block
triangular matrix is calculated recursively, i.e., one block diagonal after the other,
see~\cite{Parlett74,Parlett76}; these computations employ solving the Sylvester
equations. When the function $f$ is applied to an individual diagonal block, the function
$f$ is replaced by its Taylor expansion, see~\cite{Kagstrom77,Davies-Higham2003}. As a
consequence the problem connected with the calculation of divided differences of the kind
$\frac{f(\mu_1)-f(\mu_2)}{\mu_1-\mu_2}$ with close $\mu_1$ and $\mu_2$ disappears.

In this article a modification of the above algorithm is offered. It allows one to avoid
the procedures of reordering the triangular Schur representation and solving the
Sylvester equations.

The idea of the algorithm consists in the calculation of the approximate Newton
interpolating polynomial $p(A)$ of $A$ instead of $f(A)$, where the points of
interpolation are eigenvalues $\mu_i$ of $A$. In this case the problem of cancellation of
close numbers in the divided differences $\frac{f(\mu_1)-f(\mu_2)}{\mu_1-\mu_2}$ moves to
the stage of forming the Newton interpolating polynomial $p$. The problem is solved in
the old way, i.e., by means of an approximation of $f$ by its Taylor polynomial in a
neighbourhood of close eigenvalues, which leads to a calculation of $p$ with high
accuracy. So, it remains to substitute the matrix $A$ into~$p$. Numerical experiments
show that the algorithm can be used when the order $n$ of the matrix $A$ is less then~30.
The algorithm implies that the calculation of a matrix polynomial is a solvable problem.
In this connection we refer to~\cite{Van Loan79,Higham-Knight1995} where the calculation
of matrix powers and matrix polynomials are discussed.

In Section~\ref{s:Newton polynomial} the definition of the Newton interpolating
polynomial is recalled. In Section~\ref{s:partial Newton polynomial} the calculation of
divided differences is analysed. The whole algorithm is described in
Section~\ref{s:algorithm}. Some numerical experiments are presented in Section~\ref{s:num
exper}. In Section~\ref{s:pulse char} an application to a symbolic calculation of the
impulse response of a dynamical system is discussed.

\section{The Newton interpolating polynomial}\label{s:Newton polynomial}

Let $\mu_1$, $\mu_2$, \dots, $\mu_n$ be given complex numbers (some of them may coincide
with others) called \emph{points of interpolation}. Let a complex-valued function $f$ be
defined and analytic in a neighbourhood of these points. \emph{Divided differences}
of the function $f$ with respect to the points $\mu_1$,
$\mu_2$, \dots, $\mu_n$ are defined (see, e.g.,~\cite{Gelfond}) by the recurrent
relations
 \begin{equation}\label{e:divided differences}
 \begin{split}
\Delta_i^{i}f&=f^{[0]}(\mu_i)=f(\mu_i),\\
\Delta_i^{i+1}f&=f^{[1]}(\mu_i,\mu_{i+1})=\frac{f^{[0]}(\mu_{i+1})-f^{[0]}(\mu_i)}{\mu_{i+1}-\mu_i},\\
\Delta_i^{i+m}f&=f^{[m]}(\mu_i,\dots,\mu_{i+m})=\frac{f^{[m-1]}(\mu_{i+1},\dots,\mu_{i+m})
-f^{[m-1]}(\mu_{i},\dots,\mu_{i+m-1})} {\mu_{i+m}-\mu_i}.
 \end{split}
 \end{equation}
In these formulas, if the denominator vanishes, then the quotient means the derivative
with respect to one of the arguments of the previous divided difference (this agreement
may by derived by continuity from Corollary~\ref{c:Gelfond}).

 \begin{proposition}\label{p:Gelfond}
The divided differences possess the representation
\begin{equation*}
f^{[m]}(\mu_i,\mu_{i+1},\dots,\mu_{i+m})=\frac1{2\pi
i}\int_\Gamma\frac{f(\lambda)\,d\lambda}{(\lambda-\mu_{i})(\lambda-\mu_{i+1})\dots(\lambda-\mu_{i+m})},
\end{equation*}
where the contour $\Gamma$ encloses all the points of interpolation $\mu_i$, $\mu_{i+1}$,
\dots, $\mu_{i+m}$.
 \end{proposition}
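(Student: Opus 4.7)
The plan is to proceed by induction on $m$, using the recursive definition~\eqref{e:divided differences} of the divided differences on the left-hand side and a simple partial-fraction identity on the right-hand side.

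First I would dispose of the base case $m=0$. In that case the claimed formula reduces to the Cauchy integral formula
\[
f(\mu_i)=\frac{1}{2\pi i}\int_\Gamma\frac{f(\lambda)}{\lambda-\mu_i}\,d\lambda,
\]
which holds because $f$ is analytic inside $\Gamma$ and $\mu_i$ lies there.

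For the inductive step, I would assume the integral representation is known at level $m-1$ and substitute both resulting integrals into the defining relation
\[
f^{[m]}(\mu_i,\dots,\mu_{i+m})=\frac{f^{[m-1]}(\mu_{i+1},\dots,\mu_{i+m})-f^{[m-1]}(\mu_i,\dots,\mu_{i+m-1})}{\mu_{i+m}-\mu_i},
\]
combining the two pieces under a single contour integral. The algebraic heart of the argument is the identity
\[
\frac{1}{(\lambda-\mu_{i+1})\cdots(\lambda-\mu_{i+m})}-\frac{1}{(\lambda-\mu_i)\cdots(\lambda-\mu_{i+m-1})}=\frac{\mu_{i+m}-\mu_i}{(\lambda-\mu_i)(\lambda-\mu_{i+1})\cdots(\lambda-\mu_{i+m})},
\]
which follows by reducing the two fractions to the common denominator $\prod_{k=i}^{i+m}(\lambda-\mu_k)$ and using $(\lambda-\mu_i)-(\lambda-\mu_{i+m})=\mu_{i+m}-\mu_i$. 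After dividing by $\mu_{i+m}-\mu_i$, this produces exactly the integrand appearing on the right-hand side of Proposition~\ref{p:Gelfond}.

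The only real subtlety I anticipate is the case in which some of the $\mu_k$ coincide, since the recursive definition then has to be interpreted via the derivative convention mentioned just before the proposition. I would handle this by continuity: the right-hand side of the claimed formula depends continuously on the parameters $(\mu_i,\dots,\mu_{i+m})$ so long as they remain inside $\Gamma$ (the integrand, viewed as a function on the compact set $\Gamma$, depends smoothly on these parameters), while the left-hand side is extended by continuity by convention. Hence the identity established for pairwise distinct points carries over to the general case. I do not expect any deeper obstacle, since the entire argument reduces at each inductive level to the one-line partial-fraction computation above.
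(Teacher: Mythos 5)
Your proof is correct, but it is a genuinely different route from the paper's: the paper offers no argument at all for this proposition, simply citing Gel'fond's book (ch.~1, \S~4.3, formula (54)), whereas you give a self-contained induction on $m$ --- Cauchy's integral formula for the base case, and the partial-fraction identity
\[
\frac{1}{(\lambda-\mu_{i+1})\cdots(\lambda-\mu_{i+m})}-\frac{1}{(\lambda-\mu_i)\cdots(\lambda-\mu_{i+m-1})}
=\frac{\mu_{i+m}-\mu_i}{(\lambda-\mu_i)\cdots(\lambda-\mu_{i+m})}
\]
for the inductive step, which is exactly the right computation and is essentially the classical argument the citation points to. What your version buys is that the statement no longer rests on an external reference, which is pleasant given that the rest of Section~\ref{s:partial Newton polynomial} leans on this representation (via Corollary~\ref{c:Gelfond}). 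Two small points to tidy up: when you merge the two level-$(m-1)$ integrals under one contour, note explicitly that both can be taken over the same $\Gamma$ enclosing all of $\mu_i,\dots,\mu_{i+m}$, since enlarging either contour crosses no singularity of its integrand; and in the coincident-node case, your continuity argument needs the observation that the paper's derivative convention for repeated nodes agrees with the continuous extension --- this is easiest to see directly from the integral itself (a repeated factor $(\lambda-\mu)^{r+1}$ yields, by Cauchy's formula for derivatives, exactly the derivative interpretation of the recursion), which also avoids any appearance of circularity with the remark preceding the proposition, where that convention is said to be derivable from Corollary~\ref{c:Gelfond}.
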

 \begin{proof}
See~\cite[ch.~1, \S~4.3, formula (54)]{Gelfond}.
 \end{proof}

 \begin{corollary}\label{c:Gelfond}
The divided difference $\Delta_i^{i+m}f=f^{[m]}(\mu_i,\mu_{i+1},\dots,\mu_{i+m})$ is a
symmetric function, i.e., it does not depend on the order of its arguments $\mu_i$,
$\mu_{i+1}$, \dots, $\mu_{i+m}$.
 \end{corollary}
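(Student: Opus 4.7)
The plan is to read the symmetry straight off the contour integral provided by Proposition~\ref{p:Gelfond}. The representation
\begin{equation*}
f^{[m]}(\mu_i,\mu_{i+1},\dots,\mu_{i+m})=\frac{1}{2\pi i}\int_\Gamma\frac{f(\lambda)\,d\lambda}{(\lambda-\mu_i)(\lambda-\mu_{i+1})\cdots(\lambda-\mu_{i+m})}
\end{equation*}
has a denominator that is a product of scalar factors, and ordinary multiplication in $\mathbb{C}$ is commutative, so the product $(\lambda-\mu_i)(\lambda-\mu_{i+1})\cdots(\lambda-\mu_{i+m})$ is invariant under any permutation of the arguments $\mu_i,\mu_{i+1},\dots,\mu_{i+m}$. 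The numerator $f(\lambda)$ does not involve the $\mu_j$'s at all.

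Next I would note that the contour $\Gamma$ is only required to enclose the set $\{\mu_i,\mu_{i+1},\dots,\mu_{i+m}\}$, and a set is unchanged under reordering of the list of its elements; so the same contour $\Gamma$ works for every permutation. Hence the entire right-hand side of the formula is invariant under permutations of $\mu_i,\mu_{i+1},\dots,\mu_{i+m}$, and therefore so is the left-hand side. That completes the proof.

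There is essentially no obstacle: the symmetry is visible by inspection once the integral representation is available. The only small subtlety worth a sentence is that the formula in Proposition~\ref{p:Gelfond} is stated only for pairwise distinct interpolation points (to avoid poles of higher order in the integrand being confused with the recurrence), but the extension to coincident points is handled by the continuity convention introduced just after~\eqref{e:divided differences}: since symmetry is preserved under pointwise limits, coincident cases follow automatically. Thus no separate argument for repeated nodes is required.
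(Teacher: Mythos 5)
Your proof is correct and uses the same argument as the paper: the symmetry is read directly off the contour-integral representation of Proposition~\ref{p:Gelfond}, whose integrand is manifestly invariant under permutation of the interpolation points. The extra remark about coincident nodes and continuity is a reasonable refinement but not a departure from the paper's route.
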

 \begin{proof}
The assertion follows from Proposition~\ref{p:Gelfond}.
 \end{proof}

It is convenient to arrange the divided differences into the triangular table
 \begin{equation}\label{e:table}
\begin{array}{llllllll}
\Delta_1^{1}&\Delta_2^{2}&\dots&\Delta_k^{k}&\dots&\Delta_{n-2}^{n-2}&\Delta_{n-1}^{n-1}&\Delta_n^{n}\\
\Delta_1^{2}&\Delta_2^{3}&\dots&\Delta_k^{k+1}&\dots&\Delta_{n-2}^{n-1}&\Delta_{n-1}^{n}\\
\Delta_1^{3}&\Delta_2^{4}&\dots&\Delta_k^{k+2}&\dots&\Delta_{n-2}^{n}\\
\hdotsfor[2]{5}\\
\Delta_1^{n-1}&\Delta_2^{n}\\
\Delta_1^{n}
\end{array}
 \end{equation}

\emph{The interpolating polynomial in the Newton form} or shortly \emph{the Newton
interpolating polynomial} with respect to the points $\mu_1$, $\mu_2$, \dots, $\mu_n$ is
(see, e.g.,~\cite{Gelfond}) the polynomial
\begin{equation}\label{e:Newton polynomial}
 \begin{split}
p(\lambda)&=\Delta_1^{1}f+(\lambda-\mu_1)\Delta_1^{2}f+
(\lambda-\mu_2)(\lambda-\mu_1)\Delta_1^{3}f\\
&+(\lambda-\mu_3)(\lambda-\mu_2)(\lambda-\mu_1)\Delta_1^{4}f+\dots\\
&+(\lambda-\mu_{n-1})(\lambda-\mu_{n-2})\dots(\lambda-\mu_1)\Delta_1^{n}f.
 \end{split}
\end{equation}
We stress that the Newton interpolating polynomial contains only divided differences from
the first column of table~\eqref{e:table}. The main property of the interpolating
polynomial is (see, e.g.,~\cite{Gelfond}) the equalities
\begin{equation*}
p(\mu_i)=f(\mu_i),\qquad i=1,2,\dots,n.
\end{equation*}
If two or more points $\mu_1$, $\mu_2$, \dots, $\mu_n$ coincide, the last formula is
understood as the equality of the corresponding derivatives.

A discussion of the direct application of the Newton interpolating polynomial to the
calculation of matrix functions can be found
in~\cite{McCurdy-Ng-Parlett,Parlett-Ng,Dehghan-Hajarian09}.

\section{The principal divided differences}\label{s:partial Newton polynomial}
Let us discuss the structure of the Newton interpolating polynomial in a special case.
Let us assume that some $k$ points $\mu_{l+1}$, $\mu_{l+2}$, \dots, $\mu_{l+k}$ are
situated close to each other (in particular, some of them may coincide with one another),
and the rest of points $\mu_{1}$, $\mu_{2}$, \dots, $\mu_{l}$ and $\mu_{l+k+1}$,
$\mu_{l+k+2}$, \dots, $\mu_n$ are situated far apart from them (it is possible that some
of them are also close to each other). For better distinguishing, sometimes we denote the
points from the second group by the symbols  $\nu_{1}$, $\nu_{2}$, \dots, $\nu_{l}$ and
$\nu_{l+k+1}$, $\nu_{l+k+2}$, \dots, $\nu_n$ instead of  $\mu_{1}$, $\mu_{2}$, \dots,
$\mu_{l}$ and $\mu_{l+k+1}$, $\mu_{l+k+2}$, \dots, $\mu_n$.

We call a divided difference $\Delta_i^{i+m}g$ \emph{principal} if its indices satisfy
the inequalities $l+1\le i$ and $i+m\le l+k$. In table~\eqref{e:table} principal divided
differences form a triangle with the legs of the length~$k$. For the sake of clarity we
write out some principal divided differences, for example, when $k=3$:
\begin{align*}
&f(\mu_{l+1}),&&f(\mu_{l+2}),&&f(\mu_{l+3}),\\
&\frac{f(\mu_{l+2})-f(\mu_{l+1})}{\mu_{l+2}-\mu_{l+1}},&&\frac{f(\mu_{l+3})-f(\mu_{l+2})}{\mu_{l+3}-\mu_{l+2}},\\
&\frac{\frac{f(\mu_{l+3})-f(\mu_{l+2})}{\mu_{l+3}-\mu_{l+2}}
-\frac{f(\mu_{l+2})-f(\mu_{l+1})}{\mu_{l+2}-\mu_{l+1}}}{\mu_{l+3}-\mu_{l+1}}.
\end{align*}
We call a divided difference $\Delta_i^{i+m}g$ \emph{non-principal} if its indices
satisfy the inequalities $l+1\le i\le l+k$ and $i+m>l+k$. In table~\eqref{e:table}
non-principle divided differences are situated below principle ones.

According to definition~\eqref{e:divided differences}, divided differences must be
computed row by row (in any order in each row). We note that principle divided
differences may be computed separately (row by row), i.e., independently of other divided
differences.

We stress that problems with division by a difference of close numbers arise when we
compute principal divided differences, but does not arise when we compute non-principle
ones. Actually, according to~\eqref{e:divided differences}, when we compute the
denominator of a principal divided difference $\Delta_{l+i}^{l+m}f$ the new difference
$\mu_{l+m}-\mu_{l+i}$ (of close numbers $\mu_{l+m}$ and $\mu_i$) arise. But when we
compute non-principal divided differences, new denominators may have only the form
$\nu_j-\mu_{l+i}$ (which are not small numbers).

We summarize these observations in the following proposition.

 \begin{proposition}\label{p:Newton projector}
The divided differences of the function $f$ possess the following properties.
 \begin{itemize}
 \item [\rm(a)] Principal divided differences
depend only on the points $\mu_{l+1}$, $\mu_{l+2}$, \dots, $\mu_{l+k}$ and the values of
the function $f$ {\rm(}and may be its derivatives{\rm)} at these points.
 \item [\rm(b)] The denominators of the form $\mu_{l+m}-\mu_{l+i}$ with $i,m=1,\dots,k$
{\rm(}which are small numbers{\rm)} may appear only during the calculation of principal
divided differences {\rm(}after that they may get into non-principal divided differences,
but only in an implicit form as parts of already calculated principle ones{\rm)}.
 \item [\rm(c)] The denominators of the form $\nu_j-\mu_{l+i}$
with $i=1,\dots,k$ and $j\neq l+1,\dots,l+k$ {\rm(}which are large numbers{\rm)} may
appear only in non-principal divided differences.
 \end{itemize}
 \end{proposition}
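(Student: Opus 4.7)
My plan is to unfold the recursive definition~\eqref{e:divided differences} and track which points $\mu_j$ and which denominators $\mu_{i+m}-\mu_i$ appear at each stage, invoking Proposition~\ref{p:Gelfond} only as a shortcut for~(a).

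For~(a), the integral representation in Proposition~\ref{p:Gelfond} shows immediately that $f^{[m]}(\mu_i,\dots,\mu_{i+m})$ depends only on $f$ and on the points $\mu_i,\dots,\mu_{i+m}$. Since for a principal difference $\{i,\dots,i+m\}\subseteq\{l+1,\dots,l+k\}$, only the cluster points $\mu_{l+1},\dots,\mu_{l+k}$ enter. The same conclusion can be reached by induction on~$m$ from~\eqref{e:divided differences}: one checks that if $\Delta_i^{i+m}f$ is principal then so are both $\Delta_i^{i+m-1}f$ and $\Delta_{i+1}^{i+m}f$, so the inductive step follows. The case where some points coincide yields the derivatives of $f$ mentioned in the statement.

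For~(b) and~(c) I would argue directly from~\eqref{e:divided differences}, carefully separating the \emph{new} denominator $\mu_{i+m}-\mu_i$ introduced at the step that produces $\Delta_i^{i+m}f$ from those already contained in the recursively used quantities $\Delta_i^{i+m-1}f$ and $\Delta_{i+1}^{i+m}f$. Under this reading, a small denominator $\mu_{l+m'}-\mu_{l+i'}$ (both indices in the cluster) is introduced as a new divisor precisely when one computes $\Delta_{l+i'}^{l+m'}f$, a difference that is principal by definition, which proves~(b). A denominator of the form $\nu_j-\mu_{l+i'}$ is introduced as a new divisor precisely when one endpoint lies in $\{l+1,\dots,l+k\}$ and the other does not, and any such difference is non-principal, which proves~(c).

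The only delicate point is the bookkeeping alluded to in parentheses in the statement: once a small denominator has been created inside a principal difference, it is algebraically carried into every later non-principal difference built from that principal one, but \emph{no further} division by a small number is performed at those subsequent stages. Making this distinction between ``newly introduced'' and ``implicitly inherited'' denominators precise is, I expect, the only real obstacle; everything else is a direct reading of the recursion together with the index conditions defining principality.
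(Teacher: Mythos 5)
Your proposal is correct and matches the paper's approach: the paper's entire proof is the one-line remark that the result follows by induction on $m$, and your unfolding of recursion~\eqref{e:divided differences}, tracking the newly introduced denominator $\mu_{i+m}-\mu_i$ versus those inherited from $\Delta_i^{i+m-1}f$ and $\Delta_{i+1}^{i+m}f$, is exactly that induction spelled out (with the integral representation as an optional shortcut for~(a)).
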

 \begin{proof} The complete proof is by induction on $m$.
 \end{proof}

Proposition~\ref{p:Newton projector}(c) shows that if we meet with success in organizing
the calculation of principal divided differences without essential losses of accuracy,
then there will be no large losses of accuracy in the calculation of non-principal ones
as well.

In order to compute the principal divided differences we approximate the function $f$ in
a neighbourhood of the points $\mu_{l+1}$, $\mu_{l+2}$, \dots, $\mu_{l+k}$ by a
polynomial~$h$. The simplest and universal way is to take for $h$ the Taylor polynomial
of the function $f$ about the middle point
\begin{equation*}
\bar\mu=\frac{\mu_{l+1}+\mu_{l+2}+\dots+\mu_{l+k}}{k}.
\end{equation*}
It is desirable that some disk centered in $\bar\mu$ contains all the points $\mu_{l+1}$,
$\mu_{l+2}$, \dots, $\mu_{l+k}$ and lies in the domain of the function~$f$. We note that
it would be more convenient to take for $\bar\mu$ the centre of a disk of the smallest
radius that contains all the points $\mu_{l+1}$, $\mu_{l+2}$, \dots, $\mu_{l+k}$. But
finding such a centre requires additional efforts.

In the absence of a complementary information on the matrix $A$ it is reasonable to take
the degree of $h$ not less than $k-1$. In exact arithmetic, the higher degree of $h$, the
better approximation may be achieved, but the enlargement of the degree of $h$ slows down
the calculations. Thus a compromise is necessary, for a detailed discussion
see~\cite{Davies-Higham2003}.

So, let us assume that in a neighbourhood of $\bar\mu$ the function $f$ is replaced by
the polynomial
\begin{equation}\label{e:h}
h(\lambda)=\sum_{\alpha=0}^{k+\gamma}c_\alpha(\lambda-\bar\mu)^\alpha
\end{equation}
of degree $k+\gamma$, where $\gamma=-1,0,1,\dots$. For the sake of clarity, we write out
principal divided differences of $h$ for the special case when $k=4$ and $\gamma=0$:
\begin{align*}
\Delta_{l+1}^{l+1}h&=c_0+c_1\xi_{l+1}+c_2\xi_{l+1}^2+c_3\xi_{l+1}^3+c_4\xi_{l+1}^4,\\
\Delta_{l+1}^{l+2}h&=c_1+c_2(\xi_{l+1}+\xi_{l+2})
+c_3(\xi_{l+1}^2+\xi_{l+1}\xi_{l+2}+\xi_{l+2}^2)\\
&+c_4(\xi_{l+1}^3+\xi_{l+1}^2\xi_{l+2}+\xi_{l+1}\xi_{l+2}^2+\xi_{l+2}^3),\\
\Delta_{l+1}^{l+3}h&=c_2+c_3(\xi_{l+1}+\xi_{l+2}+\xi_{l+3})\\
&+c_4(\xi_{l+1}^2+\xi_{l+2}^2+\xi_{l+2}^2+\xi_{l+3}^2+\xi_{l+1}\xi_{l+2}+\xi_{l+1}\xi_{l+3}+\xi_{l+2}\xi_{l+3}),\\
\Delta_{l+1}^{l+4}h&=c_3+c_4(\xi_{l+1}+\xi_{l+2}+\xi_{l+3}+\xi_{l+4}),
\end{align*}
where $\xi_i$ is the shorthand for $\mu_i-\bar\mu$.

The structure of $h$ is described in the following proposition.

 \begin{proposition}\label{p:Newton projector2}
Let a function $h$ be a polynomial of the form~\eqref{e:h} in a neighbourhood of the
points $\mu_i$, $\mu_{i+1}$, \dots, $\mu_{i+m}$. Then the divided differences of the
function $h$ possess the representation
 \begin{equation}\label{e:Delta h}
 \begin{split}
\Delta_i^{i+m}h&=\sum_{\alpha=m}^{k+\gamma}c_{\alpha}\sigma_{\alpha-m}(\xi_i,\xi_{i+1},\dots,\xi_{i+m})\\
&=c_{m}+\sum_{\alpha=m+1}^{k+\gamma}c_{\alpha}\sigma_{\alpha-m}(\xi_i,\xi_{i+1},\dots,\xi_{i+m}),
 \end{split}
\end{equation}
where the homogeneous polynomials $\sigma_\alpha$ are defined by the formulas
\begin{align*}
\sigma_0(\xi_i,\xi_{i+1},\dots,\xi_{i+m})&=1,\\
\sigma_\alpha(\xi_i,\xi_{i+1},\dots,\xi_{i+m})&=\sum_{i_0+i_1+\dots+i_m=\alpha}\xi_i^{i_0}\xi_{i+1}^{i_1}\dots\xi_{i+m}^{i_m}.
\end{align*}
In particular, the divided differences of the function $h$ does not contain differences
of close numbers in denominators {\rm(}in this representation{\rm)}.
 \end{proposition}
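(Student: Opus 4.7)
The plan is to exploit the linearity of divided differences in the function argument to reduce everything to single monomials. Since definition~\eqref{e:divided differences} is linear in $f$, decomposing $h(\lambda)=\sum_{\alpha=0}^{k+\gamma}c_\alpha g_\alpha(\lambda)$ with $g_\alpha(\lambda)=(\lambda-\bar\mu)^\alpha$ gives
\[
\Delta_i^{i+m}h=\sum_{\alpha=0}^{k+\gamma}c_\alpha\,\Delta_i^{i+m}g_\alpha,
\]
so~\eqref{e:Delta h} will follow once I show the monomial identity
\[
\Delta_i^{i+m}g_\alpha=\begin{cases}\sigma_{\alpha-m}(\xi_i,\xi_{i+1},\dots,\xi_{i+m}),&\alpha\ge m,\\ 0,&\alpha<m.\end{cases}
\]

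I will establish this by induction on the order $m$. The base case $m=0$ is immediate: $\Delta_i^{i}g_\alpha=g_\alpha(\mu_i)=\xi_i^\alpha=\sigma_\alpha(\xi_i)$, and the vanishing for $\alpha<m=0$ is vacuous. For the inductive step, the recurrence in~\eqref{e:divided differences} together with the equality $\mu_{i+m}-\mu_i=\xi_{i+m}-\xi_i$ gives
\[
\Delta_i^{i+m}g_\alpha=\frac{\Delta_{i+1}^{i+m}g_\alpha-\Delta_i^{i+m-1}g_\alpha}{\xi_{i+m}-\xi_i}=\frac{\sigma_{\alpha-m+1}(\xi_{i+1},\dots,\xi_{i+m})-\sigma_{\alpha-m+1}(\xi_i,\dots,\xi_{i+m-1})}{\xi_{i+m}-\xi_i},
\]
so the whole inductive step is reduced to the purely algebraic identity
\[
\sigma_\beta(\xi_{i+1},\dots,\xi_{i+m})-\sigma_\beta(\xi_i,\dots,\xi_{i+m-1})=(\xi_{i+m}-\xi_i)\,\sigma_{\beta-1}(\xi_i,\xi_{i+1},\dots,\xi_{i+m}). \tag{$\ast$}
\]
The final assertion about denominators will then be transparent from the right-hand side of~\eqref{e:Delta h}, which involves no division at all.

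The main step, and essentially the only nonroutine one, is $(\ast)$. I plan to derive it from the standard splitting formula for the complete homogeneous symmetric polynomials $\sigma_\beta$, applied twice. Isolating the power of $\xi_i$ in $\sigma_\beta(\xi_i,\dots,\xi_{i+m})$ yields
\[
\sigma_\beta(\xi_i,\dots,\xi_{i+m})=\sigma_\beta(\xi_{i+1},\dots,\xi_{i+m})+\xi_i\,\sigma_{\beta-1}(\xi_i,\dots,\xi_{i+m}),
\]
and isolating the power of $\xi_{i+m}$ yields the analogous formula with $\xi_i$ replaced by $\xi_{i+m}$; subtracting the two gives $(\ast)$. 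The possible coincidence of some of the points causes no difficulty, because $(\ast)$ is an algebraic identity in the $\xi_j$'s and the inductive computation above is valid under the continuity convention made after~\eqref{e:divided differences}; alternatively, in the confluent case one may verify the claim directly from the contour representation of Proposition~\ref{p:Gelfond} by substituting $w=\lambda-\bar\mu$. No serious obstacle is expected; the combinatorial identity $(\ast)$ is the only step that needs care.
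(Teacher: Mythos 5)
Your proof is correct, and its overall strategy matches the paper's — induction on $m$, with the inductive step reduced to an identity for the complete homogeneous symmetric polynomials $\sigma_\alpha$ — but the details differ in two ways. First, you reduce to the monomials $(\lambda-\bar\mu)^\alpha$ by linearity of the divided differences, which the paper handles implicitly by carrying the sums over $\alpha$ through the computation; this is a cosmetic simplification. Second, and more substantively, you use recurrence~\eqref{e:divided differences} in its given asymmetric form $\bigl(\Delta_{i+1}^{i+m}-\Delta_i^{i+m-1}\bigr)/(\xi_{i+m}-\xi_i)$ and prove the needed identity $\sigma_\beta(\xi_{i+1},\dots,\xi_{i+m})-\sigma_\beta(\xi_i,\dots,\xi_{i+m-1})=(\xi_{i+m}-\xi_i)\,\sigma_{\beta-1}(\xi_i,\dots,\xi_{i+m})$ by applying twice the stripping formula that isolates the powers of the first (respectively, the last) variable; the paper instead invokes the symmetry of divided differences (Corollary~\ref{c:Gelfond}) to rewrite both terms so that only the last argument changes, and then proves its auxiliary identity by expanding $(\xi_{i+m}^{i_0}-\xi_i^{i_0})/(\xi_{i+m}-\xi_i)$ as a geometric sum. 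Your route thus avoids Corollary~\ref{c:Gelfond} altogether (Proposition~\ref{p:Gelfond} is needed only optionally, for the confluent case), at the price of verifying two splitting identities instead of one telescoping one; your treatment of coincident points via the derivative/continuity convention is at the same level of rigor as the paper's own remark on that case.
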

 \begin{proof}
Since we are interested only in principal divided differences, by
Proposition~\ref{p:Newton projector}(a), we may assume that $k=n$. We proceed by
induction on $m$. For $m=0$ the assertion is evident. We assume that
representation~\eqref{e:Delta h} holds for
$\Delta_i^{i+m-1}h=h^{[m-1]}(\xi_i,\xi_{i+1},\dots,\xi_{i+m-1})$. We show that
representation~\eqref{e:Delta h} holds for
$\Delta_i^{i+m}h=h^{[m]}(\xi_i,\xi_{i+1},\dots,\xi_{i+m-1},\xi_{i+m})$. We begin with the
auxiliary identity (if $\xi_{i+m}=\xi_i$, the division by $\xi_{i+m}-\xi_i$ is understood
as the differentiation; cf. the definition of a divided difference)
\begin{align*}
&\frac{\sigma_\alpha(\xi_{i+1},\dots,\xi_{i+m-1},\xi_{i+m})
-\sigma_\alpha(\xi_{i+1},\dots,\xi_{i+m-1},\xi_{i})} {\xi_{i+m}-\xi_i}\\
&=\sum_{i_0+i_1+\dots+i_{m-1}=\alpha}\xi_{i+1}^{i_1}\dots\xi_{i+m-1}^{i_m}
\frac{\xi_{i+m}^{i_0}-\xi_i^{i_0}}{\xi_{i+m}-\xi_i}\\
&=\sum_{i_0+i_1+\dots+i_{m-1}=\alpha}\xi_{i+1}^{i_1}\dots\xi_{i+m-1}^{i_m}
\sum_{i_{i+m}=0}^{i_0-1}\xi_i^{i_0-i_{i+m}}\xi_{i+m}^{i_{i+m}}\\
&=\sum_{i_0+i_1+\dots+i_{m}=\alpha-1}\xi_i^{i_0}\xi_{i+1}^{i_1}\dots\xi_{i+m-1}^{i_m}\xi_{i+m}^{i_{i+m}}\\
&=\sigma_{\alpha-1}(\xi_i,\xi_{i+1},\dots,\xi_{i+m}).
\end{align*}

By definition and by Corollary~\ref{c:Gelfond} we can represent $\Delta_i^{i+m}h$ in the
form
\begin{equation*}
\Delta_i^{i+m}h=\frac{h^{[m-1]}(\xi_{i+1},\dots,\xi_{i+m-1},\xi_{i+m})
-h^{[m-1]}(\xi_{i+1},\dots,\xi_{i+m-1},\xi_{i})} {\xi_{i+m}-\xi_i}.
\end{equation*}
Therefore (by the above auxiliary identity)
\begin{align*}
\Delta_i^{i+m}h&=\frac{h^{[m-1]}(\xi_{i+1},\dots,\xi_{i+m-1},\xi_{i+m})
-h^{[m-1]}(\xi_{i+1},\dots,\xi_{i+m-1},\xi_{i})} {\xi_{i+m}-\xi_i}\\
&=\frac1{\xi_{i+m}-\xi_i}\Biggl(\sum\limits_{\alpha=m-1}^{k+l}c_{\alpha}\sigma_{\alpha-m+1}(\xi_{i+1},\dots,\xi_{i+m-1},\xi_{i+m})\\
&-\sum\limits_{\alpha=m-1}^{k+l}c_{\alpha}\sigma_{\alpha-m+1}(\xi_{i+1},\dots,\xi_{i+m-1},\xi_{i})\Biggr)
\\
&=\sum_{\alpha=m}^{k+l}c_{\alpha}\sigma_{\alpha-m}(\xi_i,\xi_{i+1},\dots,\xi_{i+m})\\
&=c_{m}+\sum_{\alpha=m+1}^{k+l}c_{\alpha}\sigma_{\alpha-m}(\xi_i,\xi_{i+1},\dots,\xi_{i+m}).
\end{align*}
Thus, representation~\eqref{e:Delta h} is established.
 \end{proof}

Finally, we arrive at the following theorem.
 \begin{theorem}\label{t:Newton polynomial of h}
Let in a neighbourhood of the points $\mu_{l+1}$, $\mu_{l+2}$, \dots, $\mu_{l+k}$ the
function $f$ coincide with a polynomial. Then the divided differences
$\Delta_{i}^{i+m}f$, $l+1\le i\le l+k$, can be computed without subtraction of close
numbers in denominators.
 \end{theorem}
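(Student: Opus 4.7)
The plan is to obtain the theorem as a direct consequence of Proposition~\ref{p:Newton projector2}, together with the locality statement in Proposition~\ref{p:Newton projector}(a). First, I would expand the polynomial with which $f$ agrees in a neighbourhood of the cluster about the centroid
\[
\bar\mu=\frac{\mu_{l+1}+\mu_{l+2}+\dots+\mu_{l+k}}{k}.
\]
Since every polynomial equals its own finite Taylor series, this produces a representation of the form~\eqref{e:h} for a suitable nonnegative integer $\gamma$; denote this polynomial by $h$, so that $f$ and $h$ coincide in a neighbourhood of $\mu_{l+1},\dots,\mu_{l+k}$.

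Next, I would use Proposition~\ref{p:Newton projector}(a) to argue that, for indices in the principal range $l+1\le i$ and $i+m\le l+k$, the divided differences $\Delta_i^{i+m}f$ depend only on the values (and possibly derivatives) of $f$ at the cluster points. Since $f$ and $h$ agree in a neighbourhood of these points, their values and all derivatives there coincide. Consequently
\[
\Delta_i^{i+m}f=\Delta_i^{i+m}h\qquad\text{for all principal indices }(i,m).
\]

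Finally, I would apply Proposition~\ref{p:Newton projector2} to $h$ to arrive at the explicit representation
\[
\Delta_i^{i+m}h=c_m+\sum_{\alpha=m+1}^{k+\gamma}c_\alpha\,\sigma_{\alpha-m}(\xi_i,\xi_{i+1},\dots,\xi_{i+m}),
\]
whose right-hand side involves only the Taylor coefficients $c_\alpha$ of $h$ at $\bar\mu$ and sums of products of the shifted arguments $\xi_j=\mu_j-\bar\mu$; in particular, no denominator of the form $\mu_{l+m}-\mu_{l+i}$ appears. This is exactly the stated conclusion.

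I do not expect a genuine obstacle, as Proposition~\ref{p:Newton projector2} carries out the substantive work. The only points that warrant care are verifying that the indices in the theorem lie inside the principal triangle described in Section~\ref{s:partial Newton polynomial}, so that Proposition~\ref{p:Newton projector}(a) legitimately permits the substitution $f\mapsto h$, and confirming that the given polynomial can be recast in the form~\eqref{e:h} about~$\bar\mu$, which amounts only to a change of expansion point.
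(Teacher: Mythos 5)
Your argument for the principal divided differences is essentially the paper's: re-expand the polynomial about the centroid $\bar\mu$ to put it in the form~\eqref{e:h}, observe that $f$ and $h$ have the same values and derivatives at the cluster points so the principal divided differences of $f$ equal those of $h$, and then invoke the explicit denominator-free representation~\eqref{e:Delta h} of Proposition~\ref{p:Newton projector2}. That part is fine.

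However, there is a genuine gap: the theorem is not restricted to the principal triangle. Its hypothesis is only $l+1\le i\le l+k$, with no requirement $i+m\le l+k$, so it also covers the \emph{non-principal} divided differences $\Delta_i^{i+m}f$ with $l+1\le i\le l+k$ and $i+m>l+k$ (those lying below the principal triangle in table~\eqref{e:table}). You explicitly flag ``verifying that the indices in the theorem lie inside the principal triangle'' as a point warranting care, but this verification fails — they need not lie there — and your proof says nothing about the remaining cases. The paper closes this half of the statement with Proposition~\ref{p:Newton projector}, parts (b) and (c): when the recursion~\eqref{e:divided differences} is continued past the principal triangle, every \emph{new} denominator has the form $\nu_j-\mu_{l+i}$ with $\nu_j$ outside the cluster, hence is not small; the small differences $\mu_{l+m}-\mu_{l+i}$ can enter only implicitly, through the already-computed principal divided differences, which by the first part have been obtained without any such subtraction. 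You need to add this step (or some equivalent of it) for the non-principal indices; without it the theorem as stated is only half proved.
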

 \begin{proof}
For principal divided differences the proof follows from Proposition~\ref{p:Newton
projector2}. For non-principal divided differences the proof follows from
Proposition~\ref{p:Newton projector}.
 \end{proof}

\section{An algorithm for the calculation of a matrix function}\label{s:algorithm}
Let $A$ be a square matrix of the size $n\times n$. In order to find its eigenvalues we
apply to the matrix $A$ the Schur algorithm, see~\cite{Golub-Van Loan96}. (If the matrix
$A$ is real, the real form of the Schur algorithm can be used; it generates a block
triangular matrix with diagonal elements of the sizes $2\times2$ and $1\times1$.) We
write out the eigenvalues of $A$ (counted with multiplicity): $\mu_1$, $\mu_2$, \dots,
$\mu_n$.

Let a small number $\delta>0$ be given. We split the set of all eigenvalues into clusters
(parts) $S_1$, $S_2$, \dots, $S_\beta$ in such a way that~\cite{Davies-Higham2003}
 \begin{itemize}
 \item [\rm(a)] $|\mu_i-\mu_j|\ge\delta$ for any $\mu_i\in S_i$ and
$\mu_j\in S_j$ with $i\neq j$;
 \item [\rm(b)] for any pair $\mu_1,\mu_\omega$ from the same cluster $S_j$ there exists a chain
$\mu_1,\dots,\mu_k=\mu_\omega\in S_i$ such that $|\mu_i-\mu_{i+1}|<\delta$ for
$i=1,\dots,k-1$.
 \end{itemize}
An algorithm for splitting eigenvalues into clusters can be found
in~\cite[p.~474]{Davies-Higham2003}. We denote by $k_j$ the number of eigenvalues in the
cluster~$S_j$.

We reorder the eigenvalues. Namely, we arrange the eigenvalues in such a way that the
eigenvalues from the same cluster are situated one after the other. Note that this
reordering is essentially more simple than the reordering of diagonal elements in the
Schur triangular representation.

Let $f$ be an analytic function defined in a neighbourhood of the spectrum of the matrix
$A$. Our aim is an approximate calculation of the matrix $f(A)$. We recall (see,
e.g.,~\cite[theorem 11.2.1]{Golub-Van Loan96}) that if at all points $\mu_i$ of the
spectrum of $A$ the values of functions $f$ and $p$ and their derivatives are close to
each other up to the order of the multiplicity of $\mu_i$, then $p(A)\approx f(A)$. So,
for an approximation to $f(A)$ we take $p(A)$, where a polynomial $p$ approximates $f$ in
a neighbourhood of the spectrum of~$A$.

According to Section~\ref{s:partial Newton polynomial}, on each cluster $S_j$ we
approximate the function $f$ by a polynomial
\begin{equation}\label{e:h_j}
h_j(\lambda)=\sum_{\alpha=0}^{k_j+\gamma_j}c_{\alpha j}(\lambda-\bar\mu_j)^\alpha,
\end{equation}
where $\gamma_j$ is chosen so that the difference $f-h_j$ is small in a neighbourhood of
the cluster~$S_j$. For each $j$, we compute principal divided differences of $f$ as
principal divided differences of $h_j$ in accordance with formula~\eqref{e:Delta h}.
After principal divided differences are calculated for \emph{all} $j$, we compute
non-principal divided differences of $f$ by definition~\eqref{e:divided differences}.
Next, we insert the divided differences into formula~\eqref{e:Newton polynomial} and
obtain the approximate Newton interpolating polynomial $p$. Finally, we substitute the
matrix $A$ into the polynomial $p$ and obtain $p(A)$ which is approximately equals
$f(A)$. We note that the employment of the Schur form may simplify this substitution.

We note that in exact arithmetic the offered algorithm results in the same approximation
of $f(A)$ as the algorithm described in~\cite{Davies-Higham2003}. Indeed, it is easy to
see that the calculation of $p(A)$, where $p$ coincides with $h_j$ in a neighbourhood of
$S_j$, according to the algorithm from~\cite{Parlett74}, gives just the result
from~\cite{Davies-Higham2003}. Nevertheless the described algorithm needs not a
reordering of eigenvalues in the Schur triangular representation and solving the
Sylvester equations.

Let us discuss briefly how to choose $\delta$. According to the definition, $\delta$ is
the estimate of the distance between the clusters from below, i.e., $\delta$ is the best
(known) constant in the estimate $\mu_i-\nu_j\ge\delta$, where $\mu_i$ and $\nu_j$ are
from different clusters. Hence the calculation of $\mu_i-\nu_j$ may result in the drop of
approximately $\log_{10}|\mu_i|-\log_{10}{\delta}$ significant decimal digits in floating
point arithmetic. So, if the desirable final accuracy and the accuracy of the eigenvalues
$\mu_i$ are known, one can estimate the smallest admissible $\delta$.

A visual control may be very useful. Since we assume that the eigenvalues $\mu_i$ are
known, we may display the spectrum of the matrix $A$. The figure can help to choose
clusters $S_j$ and their centres $\bar\mu_j$ almost manually. If the eigenvalues have a
kind of uniform distribution, the algorithm can be applied only in two extreme ways (it
becomes trivial, but not obligatorily useless): either we interpret the whole spectrum as
the only cluster (in this case the application of the algorithm implies the replacement
of $f$ by its Taylor polynomial) or we consider each eigenvalue as a separate cluster (in
this case the application of the algorithm is equivalent to the calculation of $p(A)$,
where $p$ is the Newton interpolating polynomial~\eqref{e:Newton polynomial} with divided
differences calculated by direct formulas~\eqref{e:divided differences}).

\section{Numerical experiments}\label{s:num exper}
In all numerical experiments we compute approximately $e^A$. The minimal possible
distance $\delta=0.01$ between clusters and the maximal possible size $\eta=0.001$ of the
clusters are the same for all experiments.

The results of the experiments are presented in Table~1 (numbers smaller than $10^{-10}$
are replaced with zeros). Each row of Table~1 describes the joint result of 1000
experiments with the same parameters. The headings of Table~1 have the following
meanings: $n$ is the order of the matrix $A$; $K$ is the maximal admissible number $k_j$
of eigenvalues in one cluster; $\gamma$ is the parameter from formula~\eqref{e:h} (it is
assumed that $\gamma$ is the same for all clusters $S_j$); $E(\varkappa(T))$ is the
sample mean of the condition number $\varkappa(T)=\Vert T\Vert\cdot\Vert T^{-1}\Vert$ of
the similarity transformation $T$ defined below; $E\bigl(\frac{\Vert p(A)-e^A
\Vert}{\Vert e^A\Vert}\bigr)$ is the sample mean of the relative accuracy $\frac{\Vert
p(A)-e^A \Vert}{\Vert e^A\Vert}$, where $e^A$ is the exact exponential of $A$ and $p(A)$
is computed according to the algorithm from Section~\ref{s:algorithm};
$\text{Max}(\varkappa(T))$ and $\text{Max}\bigl(\frac{\Vert p(A)-e^A \Vert}{\Vert
e^A\Vert}\bigr)$ are maximum values of the same quantities; $M$ is the number of
experiments (from 1000) that resulted in $\frac{\Vert p(A)-e^A \Vert}{\Vert
e^A\Vert}>0.001$. By $\Vert A\Vert$ we mean the operator norm
\begin{equation*}
\Vert A\Vert=\max\{\,\Vert Ax\Vert_2:\,\Vert x\Vert_2=1\,\},
\end{equation*}
where $\Vert x\Vert_2=\sqrt{|x_1|^2+\dots+|x_n|^2}$.

Each numerical experiment consists in the following. First, the sequence $k_1$, $k_2$,
\dots, $k_\beta$ of multiplicities is constructed. The numbers $k_j$ are defined as
random whole numbers from $[1,K]$; the last number $k_\beta$ is chosen so that
$k_1+\dots+k_\beta=n$. After that the approximate centres $\bar\mu^{(\text{ini})}_j$,
$j=1,\dots,\beta$, are defined as random numbers from $[-2,0]\times[-i\pi,i\pi]$. If
$\min_{i\neq j}|\bar\mu^{(\text{ini})}_i-\bar\mu^{(\text{ini})}_j|<\delta$, then the
sequence $\mu_1$, $\mu_2$, \dots, $\mu_\beta$ is rejected and another sequence is chosen.
The eigenvalues $\mu_i$, $i=1,\dots,n$, of $A$ from the $j$th cluster are defined as
$\bar\mu^{(\text{ini})}_j$ plus random numbers from $[-\eta,\eta]\times[-i\eta,i\eta]$
(we recall that the cluster $S_j$ contains $k_j$ eigenvalues). Let $\Lambda$ be a
diagonal matrix with the diagonal elements $\mu_i$, $i=1,\dots,n$, and $T$ be a matrix
(similarity transformation) consisting of random numbers from $[-1,1]\times[-i,i]$. We
set $A=T^{-1}\Lambda T$ (we never met a case when $T$ is not invertible). We take for the
exact matrix $e^A$ the matrix $T^{-1}e^{\Lambda}T$. Finally we compute the approximation
$p(A)$ of $e^A$ according to the algorithm from Section~\ref{s:algorithm} and compare
$p(A)$ with the exact matrix $e^A$.

The first three rows of Table 1 corresponds to the case $K=1$, which means that there are
no close eigenvalues; in this case the  method under discussion coincides with the
ordinary usage of the Newton interpolating polynomial (thus, the value of $\gamma$ makes
no difference). The numerical experiments show that the direct usage of the Newton
interpolating polynomial (within the framework of this experiment, i.e., for
$\delta=0.01$, the function $f(\lambda)=e^\lambda$ etc.) is not reliable if $n>60$. The
next four rows show that in the case $n=40$ the algorithm should be applied with care.
The case where $K\le30$ can be considered as more or less admissible. The case $K\le20$
is quite reliable; in this case taking $\gamma>-1$ is not very essential.

\begin{table}[th]\caption{Results of numerical experiments}\label{tab}
\begin{tabular}{|c|c|c||c|c||c|c|c|}\hline
  $n$ & $K$ & $\gamma$ & $\text{Max}(\varkappa(T))$ &
$\text{E}(\varkappa(T))$ & $\text{Max}\bigl(\frac{\Vert p(A)-e^A \Vert}{\Vert
e^A\Vert}\bigr)$ & $\text{E}\bigl(\frac{\Vert p(A)-e^A \Vert}{\Vert e^A\Vert}\bigr)$ &
$M$\\\hline\hline
 70 & 1 & $-1$ & 211500 & 4664 & 158.6 & 0.2101 & 82\\\hline
 60 & 1 & $-1$ & 107400 & 3196 & 0.001646 & $5.03\times 10^{-6}$ & 2\\\hline
 50 & 1 & $-1$ & 166600 & 2375 & $5.839\times 10^{-6}$ & $8.722\times 10^{-9}$ & 0\\\hline
\hline
 40 & 2 & $-1$ & 46850 & 1648 & 32.39 & 0.05746 & 56\\\hline
 40 & 2 & 5 & 70450 & 1749 & 0.001696 & $1.699\times 10^{-6}$ & 1\\\hline
 40 & 4 & $-1$ & 138000 & 1810 & $7.793\times 10^7$ & 109000 & 146\\\hline
 40 & 4 & 5 & 63870 & 1817 & $7.759\times 10^8$ & 789400 & 67\\\hline
 \hline
 30 & 2 & -1 & 86900 & 1062 & 356.5 & 0.3565 & 1\\\hline
 30 & 2 & 5 & 70990 & 1043 & 0 & 0 & 0\\\hline
 30 & 4 & $-1$ & 34700 & 1015 & 16.85 & 0.01878 & 17\\\hline
 30 & 4 & 5 & 42530 & 1011 & 0.000098 & $1.037\times 10^{-7}$ & 0\\\hline
 30 & 8 & $-1$ & 49010 & 1099 & 861200 & 1144 & 36\\\hline
 30 & 8 & 5 & 57790 & 1163 & $1.88\times 10^8$ & 190400 & 13\\\hline
 \hline
 20 & 4 & $-1$ & 97310 & 668 & $5.046\times 10^{-7}$ & $2.032\times 10^{-9}$ & 0\\\hline
 20 & 4 & 5 & 15090 & 582.3 & 0 & 0 & 0 \\\hline
 20 & 8 & $-1$ & 35840 & 589.2 & 3.045 & 0.003217 & 3\\\hline
 20 & 8 & 5 & 24080 & 551.5 & $6.936\times 10^{-6}$ & $1.173\times 10^{-8}$ & 0\\\hline
 20 & 16 & $-1$ &  27380 & 588.3 & 1155. & 1.155 & 2\\\hline
 20 & 16 & 5 & 21470 & 615.3 & 0.0009528 & $9.528\times 10^{-7}$ & 0\\\hline
\end{tabular}
\end{table}

\section{Symbolic calculation of the impulse response}\label{s:pulse char}
The offered algorithm can be applied to a calculation of analytic functions $f_t$ of $A$
depending on a parameter, e.g., $f_t(\lambda)=e^{\lambda t}$. In this Section we discuss
an example of such a problem.

Let us consider the dynamical system
 \begin{equation}\label{e:dynamic system}
 \begin{split}
x'(t)&=Ax(t)+bu(t),\\
y(t)&=\langle x(t),d\rangle
 \end{split}
 \end{equation}
with the scalar input $u$ and the scalar output $y$. Here, $b,d\in\mathbb C^n$ are given
vectors, and the symbol $\langle\cdot,\cdot\rangle$ means the inner product. In the
majority of applications the spectrum of $A$ is contained in the open left half plane. We
also note that in many applications (e.g., in control problems) a high accuracy (more
than 0.001) of the solution makes no sense, because the accuracy of the initial physical
model is essentially lower.

\emph{The impulse response} of system~\eqref{e:dynamic system} is the solution $y$
of~\eqref{e:dynamic system} that corresponds to the input $u(t)=\delta(t)$, where
$\delta$ is the Dirac function, and equals zero when $t<0$. It is well known that the
impulse response of system~\eqref{e:dynamic system} can be represented in the form
\begin{equation}\label{e:pulse char}
t\mapsto\langle d,e^{At}b\rangle,\qquad t>0.
\end{equation}

We assume that the order $n$ of the matrix $A$ is rather small ($n$ is about 10). In this
case the algorithm under discussion allows one to present the impulse response in a
symbolic form, i.e., in the form of a formula. If the order $n$ of $A$ is large, the
symbolic representation of the impulse response is too cumbersome; in this case, some
reduced order method~\cite{Antoulas,Benner-Mehrmann-Sorensen} can be applied in advance.

We interpret the matrix $e^{At}$ as the analytic function $f_t(\lambda)=e^{\lambda t}$
depending on the parameter $t$ applied to the matrix $A$. Hence we are able to make use
of the algorithm from Section~\ref{s:algorithm}. In a neighbourhood of the cluster $S_j$
we define $h_j$ as the Taylor polynomial of the function $f_t(\lambda)=e^{\lambda t}$
depending on the parameter $t$:
\begin{equation*}
h_j(\lambda)=\sum_{\alpha=0}^{k_j+\gamma_j}\frac{t^\alpha e^{\bar\mu_jt}}{\alpha
!}\,(\lambda-\bar\mu_j)^\alpha.
\end{equation*}
This is a special case of formula~\eqref{e:h_j} with the coefficients
\begin{equation*}
c_{\alpha j}(t)=\frac{t^\alpha e^{\bar\mu_jt}}{\alpha!},\qquad j=1,\dots,\beta,\quad
\alpha=0,\dots k_j+\gamma_j,
\end{equation*}
depending on the parameter $t$. Therefore proposition~\ref{p:Newton projector2} remains
valid. As a result the divided differences $\Delta_i^{i+m}$, the coefficients of the
Newton interpolating polynomial $p$ from Section~\ref{s:algorithm}, and the elements of
the matrix $p(A)$ are linear combinations of the functions $c_{\alpha j}$. Thus finally
we arrive at a formula similar to the classical representation of the impulse response in
the form of a linear combination of the functions $t\mapsto\frac{t^\alpha
e^{\mu_it}}{\alpha!}$.

But there are some distinctions. We recall that the degree $k_j+\gamma_j$ of the
polynomial $h_j$ may happen to be greater than the number $k_j$ of points in the cluster
$S_j$ diminished by 1 (provided that $\gamma_j>-1$). In this case the power $\alpha$ in
the expression $t^\alpha e^{\bar\mu_jt}$ may turn out to be more than $k_j-1$, which is
unusual for the exact representation of the impulse response. Besides, the numbers
$\bar\mu_j$ may not coincide precisely with the eigenvalues $\mu_i$ of $A$.

Numerical experiments show that the substitution $t=1$ into the matrix-function $t\mapsto
e^{At}$ computed in the described way gives the result which coincides within the
accuracy of calculations ($10^{-16}$) with $e^A$ computed in accordance with the
algorithm from Section~\ref{s:algorithm}.

We conclude with the well known remark. Representation~\eqref{e:pulse char} of the
impulse response shows that when we substitute the powers $A^i$ of the matrix $A$ into
the polynomial $p$ it is enough to restrict ourselves to the calculation of $A^ib$
instead of the whole $A^i$.

\section{Conclusion}
An algorithm for computing an analytic function of an $n\times n$-matrix $A$ is
presented. This algorithm is a modification of the algorithms from
\cite{Parlett74,Parlett76,Parlett-Ng,
Kagstrom77,McCurdy-Ng-Parlett,Davies-Higham2003,Higham-Al-Mohy2010}. The algorithm works
correctly when $A$ is allowed to have close eigenvalues. It reliably works when $n\le20$.
The algorithm can be used for the calculation of a symbolic representation of the impulse
response of a dynamical system of small dimension.

\end{document}